\documentclass[reqno,a4paper,11pt]{amsart}
\usepackage{color}
\usepackage{amsmath,amssymb,amsfonts,amsthm,enumerate}
\usepackage{hyperref,mathrsfs,accents}
\usepackage[T1]{fontenc}
\usepackage[active]{srcltx}
\usepackage{epsfig}
\usepackage{graphicx}
\usepackage{cite}
\usepackage{tikz}
\usepackage{pgfplots}
\usepackage{tikz-3dplot}
\usepackage{subcaption}
\usepackage{pgfplots}
\usepackage{adjustbox}
\usepackage{bigints}
\usetikzlibrary{patterns,backgrounds}

\tdplotsetmaincoords{60}{115}
\pgfplotsset{compat=newest}

\setlength{\voffset}{1 cm} \setlength{\oddsidemargin}{0.1cm}
\setlength{\evensidemargin}{0.1cm}
\setlength{\textwidth}{16.0cm} \setlength{\textheight}{20.5cm}


\catcode`@=11 \@addtoreset{equation}{section} \catcode`@=12

\newtheorem{theorem}{Theorem}[section]
\newtheorem{lemma}[theorem]{Lemma}

\newcommand{\N}{\mathbb{N}}
\newcommand{\R}{\mathbb{R}}



\newcommand{\frX}{\mathfrak{X}}


\newcommand{\Hau}{{\mathcal H}} 
\renewcommand{\dim}{\mathop{\rm dim}} 
\newcommand{\de}{\partial}

\newcommand{\sgn}{\mathop{\mathrm{sgn}}}

\newcommand{\difsim}{\Delta}

\newcommand{\restrict}{\mathbin{\vrule height 1.4ex depth 0pt width
		0.13ex\vrule height 0.13ex depth 0pt width 1.3ex}\,}

\newcommand{\cA}{\mathcal{A}}

\newcommand{\cI}{\mathcal{I}}


\renewcommand{\Subset}{\subset\!\subset}

\newcommand{\p}{P}

\renewcommand{\phi}{\varphi}
\renewcommand{\epsilon}{\varepsilon}

\AtBeginDocument{
	\let\div\relax
	\DeclareMathOperator{\div}{div}
}

\definecolor{grey}{rgb}{.7,.7,.7}
\definecolor{evidGP}{rgb}{0,0,1}
\definecolor{evidG}{rgb}{1,0,0}





\author{Giacomo Vianello}
\address{Czech Academy of Sciences (Prague - Czech Republic)}
\email{vianello@utia.cas.cz}

\thanks{The author thanks Gian Paolo Leonardi for constructive discussions on the topics of the paper. The author has been supported by GNAMPA (INdAM) Project 2025: ``Structures of sub-Riemannian hypersurfaces in Heisenberg groups''; GNAMPA (INdAM) Project 2026: ``Variational, Geometric, and Analytic Perspectives on Regularity''.}

\subjclass[2020]{49Q05, 49Q10, 35R35}

\keywords{relative perimeter, calibration, free-boundary, minimal cones}

\title[Minimality of free-boundary...]{Minimality of free-boundary axial hyperplanes in high dimensional circular cones via calibration}

\begin{document}
	\begin{abstract}
		Consider an $(n+1)$-dimensional circular cone with opening angle $\alpha \in (0,\pi)$. Using a free-boundary adaptation of the classical calibration method, we prove that, for $n \geq 4$, there exists a threshold $\bar{\alpha}(n) \in (0,\pi)$ such that if $\alpha \geq \bar{\alpha}(n)$, that is, the cone is wide enough, the intersection of the cone with an axial hyperplane is area-minimizing with respect to free-boundary variations inside the cone. This provides a counterexample to a recent Vertex-skipping Theorem proved by the author in collaboration with G.P. Leonardi, at least for $n\geq4$.
	\end{abstract}
	\maketitle
    \section{Introduction}
    Consider a volume $v > 0$ of liquid inside a container, here identified with a Lipschitz open set $\Omega \subset \R^3$. According to Gauss' formulation, in the absence of gravity, the configuration $E$ assumed by the liquid minimizes the free-energy functional
    \begin{equation} \label{eq:cap_ener}
    	\p(F;\Omega) + \gamma \, \p(F;\de \Omega)
    \end{equation}
    among all sets $F \subset \Omega$ such that $\p(F;\Omega) < +\infty$ and satisfying the volume constraint $|F| = v$.
    Here, $\gamma \in [-1,1]$ is the so-called relative adhesion coefficient between the liquid and $\de \Omega$, and $\p(F;U)$ stands for the relative perimeter of $F$ in a Borel set $U$ 
     (see \cite{maggi2012sets,Fin86book} for more details).

    From a mathematical point of view, given an open set $\Omega \subset \R^{n+1}$, a well-established internal regularity theory for minimizers of \eqref{eq:cap_ener} is available in the literature. In fact, let $E$ be a minimizer of \eqref{eq:cap_ener} and indicate by $\de^{\ast} E$ its reduced boundary.
    Letting $M := \overline{\de^{\ast} E \cap \Omega}$, it is known that $M \cap \Omega$, that is, the internal portion of $M$, is formed by the union of a smooth hypersurface and a singular set $\Sigma$ whose Hausdorff dimension $\dim_{\Hau} \Sigma$ does not exceed the critical value $n-7$, being in particular nonempty only when $n \geq 7$; see \cite{DePhilippisMaggi2015} and the references therein.
    
    Much less is known regarding the boundary regularity for minimizers of \eqref{eq:cap_ener}, that is, the regularity of the free-boundary interface $M \cap \de \Omega$. One of the first general regularity results, which holds for smooth containers $\Omega \subset \R^3$, is due to Taylor \cite{JTaylor_BR_1977}, who proved that $M \cap \de \Omega$ does not exhibit singularities. This result has been generalized much later by De Philippis and Maggi in \cite{DePhilippisMaggi2015} for minimizers of anisotropic capillarity functionals. In the isotropic case, which includes the energy functional \eqref{eq:cap_ener}, their result ensures that, if $\de \Omega$ is $C^{1,1}$, then $M$ is the union of a $C^{1,1/2}$ hypersurface with boundary and a singular set $\Sigma$ with the property that $\dim_{\Hau}(\Sigma \cap \de \Omega) \leq n-3$. Further improvements on the Hausdorff dimension of $\Sigma \cap \de \Omega$ have been provided in \cite{CEL_Impr_reg_cap_2025} when $\gamma$ is close to either $0$, $1$ or $-1$. 
    See also \cite{DP_F_M_2024} for a regularity result on $\Lambda$-minimizers of \eqref{eq:cap_ener} under the assumption that $\Omega$ is a half-space and the wet region of competitors is constrained within an open subset of $\partial \Omega$; and \cite{PTV_capcones_2025} for stability and rigidity properties of singular capillary cones. 

    Focusing now on the special case $\gamma = 0$, corresponding to the relative perimeter in $\Omega$, something stronger regarding the regularity of $M \cap \de \Omega$ was proved by Gr\"uter-Jost \cite{GruterJost1986allard} and Gr\"uter \cite{Gruter1987,Gruter_opt_reg_1987} in the setting of varifolds, under the assumption that $\de \Omega$ is $C^2$.  
    In particular, in \cite{GruterJost1986allard}, the authors demonstrate that if a varifold $M$ is close to a free-boundary hyperplane in $\Omega$, then $M$ is $C^{1,\beta}$-graphical with respect to this hyperplane. Subsequently, in \cite{Gruter_opt_reg_1987}, the author exploits the latter result to show that, as in the interior case, the Hausdorff dimension of the singular set of the free-boundary satisfies
    $\dim_{\Hau}(\Sigma \cap \de \Omega) \leq n-7$.
    
    The common denominator of the boundary regularity results mentioned above, both for $\gamma \neq 0$ and $\gamma = 0$, is that the container $\Omega$ must be smooth. The literature concerning boundary regularity becomes sparser when one looks for results involving nonsmooth containers $\Omega$. In this situation, the main available studies typically consider specific nonsmooth domains or classes of domains. This is the case, for example, in the series of works \cite{hildebrandt1997wedge1,hildebrandt1997wedge2,hildebrandt1999wedge3,hildebrandt1999wedge4} by Hildebrandt and Sauvigny, where the authors perform a thorough study of minimal surfaces in wedges (i.e., three-dimensional domains obtained as the intersection of two half-spaces); in particular, in \cite[p. 73]{hildebrandt1999wedge4}, they prove a regularity result for free-boundary minimal graphs on a plane orthogonal to the wedge.
    
    The latter result has been recently generalized by Edelen and Li in \cite{EdelenLi2022}. 
    In that work, they prove a boundary regularity theorem in the same spirit as Gr\"uter-Jost \cite{GruterJost1986allard}, holding for a varifold $M$ under the assumption that $\Omega$ is locally polyhedral and that $M$ is close to a horizontal plane (for instance, if $\Omega = W^2 \times \R$ is a wedge, horizontal planes are those containing $W^2 \times \{0\}$).
    In the same work, Edelen and Li also prove that $\dim_{\Hau}(\Sigma \cap \de \Omega) \leq n-2$, with a sharper estimate holding when the dihedral angles of their cone models are nonobtuse; see \cite[Theorem 1.2]{EdelenLi2022}. 
    In the setting of their polyhedral domains, the singular set $\Sigma$ is defined as the complementary set of the points around which the interface $M$ is the graph of a $C^{1,\beta}$ function over a horizontal plane.
    
    In this paper, we prove that for $n \geq 4$ and for circular cones with sufficiently large aperture, axial hyperplanes are area-minimizing with respect to free-boundary variations.
    This result fits naturally into some very recent contributions by the author in collaboration with Leonardi \cite{LeoVia1-2024,LeoVia2-2024} concerning boundary regularity in nonsmooth domains. In the latter, we establish a boundary monotonicity formula that holds for almost minimizers of the relative perimeter under mild regularity assumptions on $\de \Omega$, which is then employed to prove a minimizing-cone property for a blow-up sequence at a boundary point of $\Omega$.
    In \cite{LeoVia1-2024}, the authors prove a Vertex-skipping Theorem, which states that the boundary of any almost minimizer of the relative perimeter $E$ in a convex domain $\Omega \subset \R^3$ cannot contain isolated singularities (say, vertices) of $\de \Omega$. 
    We highlight that, after a dimension-reduction argument, this result combined with \cite[Theorem 1.1]{EdelenLi2022} implies the bound $\dim_{\Hau}(\Sigma \cap \de \Omega) \leq n-3$, improving the estimate given in \cite[Theorem 1.2 (1)]{EdelenLi2022}.
    
    In this framework, the main result proved here provides a counterexample to the validity of Vertex-skipping results in higher dimensions.
    The question of the validity of the Vertex-skipping for $n \geq 3$ was already tackled in \cite{LeoVia_stability}. In that work, using a suitable Lipschitz flow of deformations for a free-boundary surface in a Lipschitz container $\Omega$, which in particular allows $\de \Omega$ (including its singular set) to move, we establish a stability result for axial hyperplanes in circular cones, stated as follows.
    Given an integer $n \geq 2$, and for $\lambda > 0$, consider the circular cone
    \begin{equation*}
    	\Omega_{\lambda} := \left\{ (x,t) \in \R^{n+1} \, : \, t > \lambda \sqrt{x_1^2 + \dots + x_n^2} \, \right\} .
    \end{equation*}
    Note that the opening angle $\alpha_{\lambda} \in (0,\pi)$ of $\Omega_{\lambda}$ is related with $\lambda$ by $\lambda = \cot(\alpha_{\lambda}/2)$. Then, if $n = 2$, a hyperplane of $\R^{3}$ containing the axis of the cone (up to rotations, $x_1 = 0$) is unstable in $\Omega_{\lambda}$, for every $\lambda > 0$. This is in accordance with the aforementioned Vertex-skipping Theorem.
    In partial contrast, when $n \geq 3$, we show that there exists a threshold $\lambda^{\ast}(n) > 0$ such that if $0 < \lambda \leq \lambda^{\ast}(n)$, i.e., the circular cone has a sufficiently large opening angle $\alpha \geq \alpha^{\ast}(n) := \alpha_{\lambda^{\star}(n)}$, the axial hyperplane is stable in $\Omega_{\lambda}$, whereas for $\lambda > \lambda^{\ast}(n)$, or equivalently if $\alpha < \alpha^{\ast}(n)$, it is unstable.
    In this paper, we intend to analyze whether, for $n \geq 3$ and $0<\lambda \leq \lambda^{\ast}(n)$, an axial hyperplane is merely stable or even area-minimizing in $\Omega_{\lambda}$. We will provide a basically positive answer for $n \geq 4$.
    
    We say that a measurable set $E \subset \R^{n+1}$ has locally finite perimeter provided $\p(E;U) < + \infty$ for every $U \Subset \R^{n+1}$, see \cite{Giu84book,maggi2012sets}. For $(x,t) \in \R^{n+1}$ and $r > 0$, we denote by $B_r(x,t)$ the ball of radius $r$ centered at $(x,t)$ 
    (setting $B_r := B_r(0,0)$).
    Our main result is the following:
    
    \begin{theorem} \label{thm:min}
    	For $n \geq 4$, consider the threshold parameter
    	\begin{equation}
    		\label{eq:def_barlambda} \bar{\lambda}(n) := \dfrac{1}{2} \dfrac{n-3}{\sqrt{n-2}} ,
    	\end{equation}
    	and assume that $0 < \lambda \leq \bar{\lambda}(n)$. Let also
    	\begin{equation*}
    		E := \{ (x,t) \in \R^{n+1} \, : \, x_1 > 0 \}.
    	\end{equation*}
    	Then $E$ is a minimizer of the relative perimeter in $\Omega_{\lambda}$, that is, given a locally-finite perimeter set $F \subset \R^{n+1}$ satisfying $E \difsim F \Subset B_R$, for some $R>0$, one has
    	\begin{equation} \label{eq:min_ineq}
    		\p(E;\Omega_{\lambda} \cap B_R) \leq \p(F;\Omega_{\lambda} \cap B_R) .
    	\end{equation}
    \end{theorem}
    
    \medskip
    \noindent
    In other words, if the opening angle $\alpha$ of the circular cone satisfies $\alpha \geq \bar{\alpha}(n) := \alpha_{\bar{\lambda}(n)}$, then $H_{\lambda} := \de E \cap \Omega_{\lambda}$ is a free-boundary area-minimizing surface inside the cone.
    
    We emphasize that the above theorem provides a definitive counterexample to the validity of a Vertex-skipping Theorem such as \cite[Theorem 1.1]{LeoVia1-2024} in higher dimensions, but only when $n \geq 4$.  
    The case $n = 3$ is a limiting situation where, in view of \cite{LeoVia_stability}, provided $0 < \lambda \leq \lambda^{\ast}(n)$, $H_{\lambda}$ is stable with respect to free-boundary variations in $\Omega_{\lambda}$, but our minimality result, Theorem \ref{thm:min}, does not apply.  
    We also remark that, despite $\Omega_{\lambda}$ not being a polyhedral cone, Theorem \ref{thm:min} appears to suggest that the dimensional bound for the singular set of the free-boundary proved in \cite[Theorem 1.2 (1)]{EdelenLi2022} cannot be improved beyond the threshold $n-4$. In fact, when $n = 4$ and $0 < \lambda \leq 1/2\sqrt{2}$, the result above implies that $\de E \cap \de \Omega_{\lambda} = \{ 0 \}$, and so $0 \in \Sigma \cap \de \Omega_{\lambda}$.
    
    The proof of Theorem \ref{thm:min} is based on a calibration argument. 
    The calibration method was originally introduced by Bombieri, De Giorgi, and Giusti in their celebrated work \cite{BDG69} to prove the minimality of Simon's cone in $\R^8$. Since then, several authors have contributed by proposing new versions of the original method \cite{Lawson_1972,Federer_1975,Massari_Miranda_1983,Mackenzie_1987,Lawlor_Angle_1989,Brakke_hypercubes_1991,LM_Paired_1994,Davini_2004,DPh_Pao_2009} or by developing various generalizations to different contexts, such as the Steiner tree problem \cite{Marchese_Massaccesi_2016,MOV_NumCal_Stein_2019,Pluda_Pozzetta_2023} and the Mumford-Shah functional \cite{AmbrosioFuscoPallara2000,ABD_calibration_note,Mora_Morini_2001}. See \cite[\S 6.5]{Morgan_GMT_2009} for a more comprehensive overview on calibrations.

    Here, the calibration method is adapted to the free-boundary setting in the sense that we require the calibrating vector field to be tangent to $\de \Omega_{\lambda}$.
    Our calibration argument, explained in detail in Section \ref{sec:Min_E}, proceeds roughly as follows. For $\lambda > 0$, let $S_{\lambda} := \de \Omega_{\lambda} \setminus \{ 0 \}$, and denote by $e_1,\dots,e_n,e_t$ the canonical basis of $\R^{n+1}$.
    First, we construct a divergence-free vector field $Z$, defined outside a $2$-dimensional subset of $\overline{\Omega}_{\lambda}$, such that $Z|_{H_{\lambda}} = e_1$, $|Z| \leq 1$, and $Z|_{S_{\lambda}}$ is tangent to $S_{\lambda}$.
    Second, we apply the Divergence Theorem to $Z$ on the region $(E \difsim F)\cap\Omega_{\lambda}$. On the one hand, the fact that $Z$ is tangent to $S_{\lambda}$ allows us to disregard the contribution arising from the area of $(E \difsim F)\cap\de \Omega_{\lambda}$. On the other hand, the condition $\div Z = 0$ enables us to compare $\p(E;\Omega_{\lambda} \cap B_R)$ with $\p(F;\Omega_{\lambda} \cap B_R)$. We stress that here, as remarked above, the vector field $Z$ is not defined on the entire closed cone $\overline{\Omega}_{\lambda}$, but only outside a $2$-dimensional subset of $\overline{\Omega}_{\lambda}$. This forces us to perform an approximation argument in order to apply the Divergence Theorem to $(E \difsim F) \cap \Omega_{\lambda}$ and infer the desired inequality~\eqref{eq:min_ineq}; see Section~\ref{sec:Min_E}.
    
    The existence of the vector field $Z$ will be exhibited in Section \ref{sec:constr_cal}, and its construction will be carried out in two steps. We begin by calibrating the $(n-1)$-dimensional surface $S^0_{\lambda} := H_{\lambda} \cap S_{\lambda}$ in $S_{\lambda}$; that is, we build a divergence-free vector field $Y$, defined on a suitable subset $S_{\lambda}'$ of $S_{\lambda}$ and tangent to $S_{\lambda}$, such that $Y|_{S^0_{\lambda}} = e_1$ and $|Y| \leq 1$.
    In a slightly different formulation via differential forms, this was essentially done by Morgan in \cite{morgan2002area}, using ideas from Lawlor's PhD thesis \cite{Lawlor_Cones_1991}. Their construction basically reduces the problem to finding a solution to a suitable differential inequality subject to appropriate initial conditions (cf. the proof of Theorem \ref{thm:excal_Slambda}). Such a solution exists for $n \geq 4$ and $0 < \lambda \leq \bar{\lambda}(n)$, thereby explaining the dimensional gap occurring for $n = 3$.
    Second, we extend $Y$ to (a suitable portion of) $\Omega_{\lambda}$ by vertical projection onto $S_{\lambda}$: we will show that, due to the structure of the metric on $S_{\lambda}$, the vector field $Z$ obtained in this way is, in particular, divergence-free in its domain (cf. the proof of Theorem \ref{thm:excal}).
    
    \subsection{Organization of the paper.} In Section \ref{sec:Not_Prel} we provide some preliminary notation and notions concerning differentiable manifolds (especially Riemannian ones) and sets of finite perimeter. In Section \ref{sec:setting} we introduce further notation and perform some basic computations needed in the subsequent parts of the paper. Then, Section \ref{sec:constr_cal} is devoted to the construction of the calibration $Z$, whose main properties are collected in Theorem \ref{thm:excal}. Finally, in Section \ref{sec:Min_E} we present the calibration argument; namely, we show how the existence of the vector field $Z$ from Theorem \ref{thm:excal} can be used to demonstrate Theorem \ref{thm:min}.

	\section{Preliminary notions and basic notation}
	\label{sec:Not_Prel}
	
	\subsection{Manifolds and $k$-forms.}
	Given a $n$-dimensional manifold $M$, we denote by $TM$, $T^{\ast}M$ the tangent and cotangent bundles of $M$ respectively. For any $k \geq 1$, we define the bundle $\bigwedge^k T^{\ast} M$ as the disjoint union of the vector spaces $\bigwedge^k (T_p M)^{\ast}$ (and we observe that $\bigwedge^{1} T^{\ast} M = T^{\ast}M$). When $k = 0$, we identify $\bigwedge^0 T^{\ast} M$ with $C^{\infty}(M)$. A function $\omega: M \to \bigwedge^k T^{\ast} M$ with the property that $\omega(p) \in \bigwedge^k (T_p M)^{\ast}$, for all $p \in M$, is called $k$-form. Given a local chart $\phi = (x_1,\dots,x_n)$ for $M$, we denote by $\de_i$ and $dx_i$, for $1 \leq i \leq n$, the local frames induced by $\phi$ on $TM$ and $T^{\ast}M$ respectively.
	We introduce the following multilinear algebra notation: given a basis $u_1,\dots,u_n$ of a $n$-dimensional vector space $V$ and a multi-index $\alpha = (\alpha_1,\dots,\alpha_{k}) \in \N^k$ with $1 \leq \alpha_1 < \dots < \alpha_k \leq n$, we set
	\begin{equation*}
		u_{\alpha} = u_{\alpha_1} \wedge\dots\wedge u_{\alpha_k} .
	\end{equation*}
	Moreover, fixing $1 \leq i < j \leq k$, we define
	\begin{equation} \label{eq:multlin_not}
		\begin{split}
			& \, \, \, u^i_{\alpha} = u_{\alpha_1}\wedge\dots\wedge u_{\alpha_{i-1}} \wedge u_{\alpha_{i+1}} \wedge \dots \wedge u_{\alpha_k} \\
			u^{i,j}_{\alpha} = u_{\alpha_1} & \wedge \dots\wedge u_{\alpha_{i-1}} \wedge u_{\alpha_{i+1}} \wedge \dots \wedge u_{\alpha_{j-1}} \wedge u_{\alpha_{j+1}} \wedge \dots \wedge u_{\alpha_k} .
		\end{split}
	\end{equation}
	Any $k$-form can be then represented by
	\begin{equation*} 
		\omega = \sum_{\underset{1 \leq \alpha_1 < \dots < \alpha_k \leq n}{\alpha = (\alpha_1,\dots,\alpha_k) \in \N^k}} \omega_{\alpha} \, dx_{\alpha},
	\end{equation*}
	for suitable scalar functions $\omega_{\alpha} : M \to \R$.
	We indicate by $\frX(M)$, $\cA^k(M)$ the space of the vector fields on $M$ and the space of the $k$-forms on $M$, respectively.
	
	\subsection{Riemannian manifolds.}
	\label{subsec:Riem_Man}
	Let $(M,g)$ be a Riemannian manifold. Given two vectors $v,w \in T_p M$, we set $\langle v, w \rangle_g := g_p(v,w)$, $|v|_g := \sqrt{g_p(v,v)}$, with the convention that, if the subscript $g$ is omitted, we are referring to the Euclidean scalar product.
	The metric $g$ can be expressed in terms of the local frame for $TM$, by introducing the $\R$-valued functions
	\begin{equation*}
		g_{i,j}(p) := \langle \de_i(p), \de_j(p) \rangle_g \, , \qquad \text{for $1 \leq i,j \leq n$.}
	\end{equation*}
	With a little abuse, we denote by $g$ the symmetric, positive-definite matrix $g = (g_{i,j})_{1 \leq i,j \leq n}$, while we indicate by $\tilde{g} = (\tilde{g}_{i,j})_{1 \leq i,j \leq n}$ the inverse of $g$.
	The metric $g$ on $M$ can be extended to $\bigwedge^k T^{\ast} M$, for all $k \geq 1$. When $k=1$, we set
	\begin{equation*}
		\langle dx_i, dx_j \rangle_g := \tilde{g}_{i,j} \, , \qquad \text{for $1 \leq i,j \leq n$,}
	\end{equation*}
	and, when $k \geq 2$, given two multi-indices $\alpha= (\alpha_1,\dots,\alpha_k), \beta =(\beta_1,\dots,\beta_k) \in \mathbb{N}^k$, we define
	\begin{equation} \label{eq:metric_forms}
		\begin{split}
			\langle dx_{\alpha}, dx_{\beta} \rangle_g & := \det (\langle dx_{\alpha_i}, dx_{\beta_j} \rangle_g)_{1 \leq i,j \leq k} \\
			& = \det(\tilde{g}_{\alpha_i,\beta_j})_{1 \leq i,j \leq k} .
		\end{split}
	\end{equation}
	These definitions can be then extended to $\bigwedge^k T^{\ast}M$ by multi-linearity.
	By employing the metric structure of a Riemannian manifold, one can establish an identification between the tangent and the cotangent bundles.
	In local coordinates, this identification acts as follows: given a vector field $X = \sum_{i=1}^n X_i \de_i \in TM$ one can consider the $1$-form $X^{\flat}$ defined by 
	\begin{equation*}
		X^{\flat} = \sum_{i,j = 1}^{n} g_{i,j} X_i \, dx_j \, ;
	\end{equation*}
	conversely, given a $1$-form $\omega = \sum_{i=1}^n \omega_i \, dx_i \in T^{\ast} M$, one defines the vector field
	\begin{equation*}
		\omega^{\#} = \sum_{i,j = 1}^{n} \tilde{g}_{i,j} \, \omega_i \, \de_j \, .
	\end{equation*}
	Note that $|X^{\flat}|_g = |X|_g$, $|\omega^{\#}|_g = |\omega|_g$, and also $X^{\flat\#} = X$, $\omega^{\#\flat} = \omega$.
	
	Let now $M$ and $N$ be Riemannian manifolds and consider a diffeomorphism $F : M \to N$. Given a vector field $X \in \frX(M)$, the push-forward of $X$ with respect to $F$ is a vector field on $N$ defined by
	\begin{equation*}
		F_{\ast} X (q) = dF_{F^{-1}(q)}(X(F^{-1}(q))) \, , \qquad \text{for all $q \in N$.}
	\end{equation*}
	We denote by $\div^M X$ the divergence of a vector field $X$ on a Riemannian manifold $M$, 
	and we omit the superscript $M$ when referring to the Euclidean divergence. One can easily verify that, if $F$ is an isometry,
	\begin{equation} \label{eq:prop_divM}
		\div^N F_{\ast} X \, (q) = \div^M X \, (F^{-1}(q)) , \qquad \text{for every $q \in N$.}
	\end{equation}
	We conclude by stating the following formula for the divergence on $M$: if $X$ is expressed in local coordinates by $X = \sum_{i=1}^n X_i \de_i$, we have
	\begin{equation} \label{eq:div_loc_coords}
		\div^M X = \dfrac{1}{\sqrt{\det g}} \, \sum_{i = 1}^n \, \de_i \left( \sqrt{\det g} \, X_i \right) ,
	\end{equation}
	where $\det g$ is the determinant of the matrix $g = (g_{i,j})_{1 \leq i,j \leq n}$ introduced above.
	
	\subsection{The Hodge-$\star$ operator.}
	Consider an oriented Riemannian manifold $(M,g)$ and denote by $\nu$ its volume form (in local coordinates, $\nu = \sqrt{\det g} \, dx_1 \wedge \dots \wedge dx_n$). For any $k \in \mathbb{N}$, the Hodge-$\star$ operator is a linear isometry $\star: \bigwedge^k T^{\ast} M \to \bigwedge^{n-k} T^{\ast} M$ which is uniquely defined by the condition
	\begin{equation} \label{eq:id_Hodge}
		\omega \wedge (\star \psi) = \langle \omega , \psi \rangle_g \, \nu , \qquad \text{for all $k$-forms $\omega$ and $\psi$.}
	\end{equation}
	It can be shown that $\star \circ \star = \star^2$ coincides with the multiplication by $(-1)^{k(n-k)}$, and that $\star \nu = 1$.
	In particular, when $k = n-1$, the operator $\star$ allows to associate to every $(n-1)$-form $\omega \in \cA^{n-1}(M)$ a $1$-form $\star \omega \in \cA^1(M)$.
	Given a vector field $X \in \frX(M)$, denote by $i_X \nu \in \cA^{n-1}(M)$ the contraction between $X$ and the volume form $\nu$. We have the following technical Lemma (see \cite[p. 52]{Lee_book}).
	\begin{lemma} \label{lem:id_Hodge}
		Let $X \in \frX(M)$. The following identities hold:
		\begin{itemize}
			\item[(i)] 
			$\star X^{\flat} = i_X \nu$;
			\item[(ii)] $\star d (i_X \nu) = \div^M X$.
		\end{itemize}
	\end{lemma}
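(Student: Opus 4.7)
The plan is to prove both identities pointwise by unwinding the definitions in a convenient frame, and, for (ii), by combining Cartan's magic formula with the characterization of the divergence via the Lie derivative of the volume form.

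For (i), I would fix $p \in M$ and pick a positively oriented $g$-orthonormal basis $(e_1,\dots,e_n)$ of $T_p M$ with dual coframe $(\theta^1,\dots,\theta^n)$, so that $\nu_p = \theta^1 \wedge \cdots \wedge \theta^n$. Writing $X(p) = \sum_i X_i e_i$, the lowering operator yields $X^\flat = \sum_i X_i \theta^i$, and by linearity of $\star$ it suffices to compute each $\star \theta^i$. Testing the defining identity \eqref{eq:id_Hodge} against the orthonormal basis $\theta^j$ of $1$-forms gives
$$\star \theta^i = (-1)^{i-1}\, \theta^1 \wedge \cdots \wedge \widehat{\theta^i} \wedge \cdots \wedge \theta^n,$$
where the hat denotes omission. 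On the other hand, expanding $i_X\nu = \nu(X,\cdot,\dots,\cdot)$ in the same orthonormal frame produces exactly the same alternating sum. Comparing the two expressions and extending by linearity proves (i). Alternatively, one can verify the defining identity for $\star X^\flat$ directly, by checking that for every $1$-form $\omega$ one has $\omega \wedge i_X \nu = \langle \omega, X^\flat \rangle_g \, \nu$, and then invoking the uniqueness in \eqref{eq:id_Hodge}.

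For (ii), I would combine three standard facts. Since $\nu$ is the top-degree form on $M$ we have $d\nu = 0$, so Cartan's magic formula collapses to
$$d(i_X \nu) = \mathcal{L}_X \nu - i_X\, d\nu = \mathcal{L}_X \nu.$$
Next, on any oriented Riemannian manifold one has the classical identity $\mathcal{L}_X \nu = (\div^M X)\, \nu$; this may be taken as the intrinsic definition of the divergence or, equivalently, derived from the local-coordinate formula \eqref{eq:div_loc_coords} by differentiating the representative $\sqrt{\det g}\, dx_1\wedge\cdots\wedge dx_n$ of $\nu$ along the flow of $X$ and matching the coefficient. Finally, applying $\star$ to both sides and using $\star \nu = 1$ (recalled just before the lemma) gives
$$\star d(i_X \nu) = (\div^M X)\, \star \nu = \div^M X,$$
which is exactly (ii).

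Both identities are textbook results, so I do not expect a genuine obstacle: the only delicate points are keeping consistent the sign and orientation conventions fixed in Subsection \ref{subsec:Riem_Man} (in particular, that $\star \theta^i$ above carries the sign $(-1)^{i-1}$ coming from $\theta^i \wedge (\theta^1 \wedge \cdots \wedge \widehat{\theta^i} \wedge \cdots \wedge \theta^n) = (-1)^{i-1}\nu$), and justifying $\mathcal{L}_X \nu = (\div^M X)\,\nu$ in a self-contained way. Given that the lemma is cited from \cite{Lee_book}, I would keep the exposition to a short orthonormal-frame computation for (i) and the three-line chain displayed above for (ii).
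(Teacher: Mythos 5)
Your proof is correct. Note, however, that the paper does not actually prove Lemma \ref{lem:id_Hodge}: it simply cites \cite[p.~52]{Lee_book} and uses the identities as known facts. Your argument --- the orthonormal-coframe computation of $\star\theta^i$ and $i_X\nu$ for (i), and the chain $\star d(i_X\nu)=\star\mathcal{L}_X\nu=(\div^M X)\,\star\nu=\div^M X$ via Cartan's formula for (ii) --- is the standard textbook derivation, consistent with the paper's conventions (in particular $\star\nu=1$ and the local formula \eqref{eq:div_loc_coords}, with which your identity $\mathcal{L}_X\nu=(\div^M X)\,\nu$ is easily checked by writing $i_X\nu=\sum_i(-1)^{i-1}\sqrt{\det g}\,X_i\,dx_1\wedge\cdots\wedge\widehat{dx_i}\wedge\cdots\wedge dx_n$ and differentiating), so it would serve as a valid self-contained substitute for the citation.
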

	
	\subsection{Locally finite perimeter sets under set operations} Given a measurable set $E \subset \R^{n+1}$ and $s \in [0,1]$, we define
	\begin{equation*}
		E^{(s)} := \left\{ (x,t) \in \R^{n+1} \, : \, \lim_{\rho \to 0^+} \dfrac{|E \cap B_{\rho}(x,t)|}{|B_{\rho}(x,t)|} = s \right\} . 
	\end{equation*}
	If $E$ has locally finite perimeter, we indicate by $\de^{\ast} E$ its reduced boundary and by $\nu_E$ the inner unit normal to $\de^{\ast} E$ \cite{maggi2012sets}.
	Assume now that $E$ and $F$ have locally finite perimeter. We define the set
		$Q(E,F)^{\pm} := \{ (x,t) \in \de^{\ast} E \cap \de^{\ast} F \, : \, \nu_E(x,t) = \pm \nu_F(x,t) \} .$
	It is well known that $E \cap F$, $E \setminus F$ have also locally finite perimeter and, up to $\Hau^{n}$-negligible sets, one has (see \cite[Theorem 16.3]{maggi2012sets})
	\begin{align}
		& \de^{\ast} (E \cap F) = \left( F^{(1)} \cap \de^{\ast} E \right) \cup \left( E^{(1)} \cap \de^{\ast} F \right) \cup Q(E,F)^+ \label{eq:deEcapF} \\
		& \de^{\ast} (E \setminus F) = \left( F^{(0)} \cap \de^{\ast} E \right) \cup \left( E^{(1)} \cap \de^{\ast} F \right) \cup Q(E,F)^- , \label{eq:deEsetminusF}
	\end{align}
	and
	\begin{align}
		& \nu_{E \cap F} = \nu_E \restrict F^{(1)} + \nu_F \restrict E^{(1)} + \nu_E \restrict Q(E,F)^+ \label{eq:nuEcapF} \\
		& \nu_{E \setminus F} = \nu_E \restrict F^{(0)} - \nu_F \restrict E^{(1)} + \nu_E \restrict Q(E,F)^- . \label{eq:nuEsetminusF}
	\end{align}
	
	\section{Further notation and preliminary computations}
	\label{sec:setting}
	Let $n \geq 2$ be an integer number. For $x' \in \R^{n-1}$, we set $x = (x_1,x')$, and
	\begin{equation*}
		r := |x'| = \sqrt{x_2^2 +\dots+ x_{n}^{2}} \qquad \text{and} \qquad  \rho := |x| = \sqrt{x_1^2 + r^2} \, .
	\end{equation*}
	With this notation, the definition of $\Omega_{\lambda}$ given in the Introduction becomes
	\begin{equation*}
		\Omega_{\lambda} = \{ (x,t) \in \R^{n+1} \, : \, t > \lambda \, \rho \} .
	\end{equation*}
	We recall that $S_{\lambda} = \de \Omega_{\lambda} \setminus \{ 0 \}$ and we consider the following subsets of $\Omega_{\lambda}$ and $S_{\lambda}$, respectively:
	\begin{equation*} 
		\Omega_{\lambda}' := \{ (x,t) \in \Omega_{\lambda} \, : \, x' \neq 0 \} \qquad \text{and} \qquad S_{\lambda}' := \{ (x,t) \in S_{\lambda} \, : \, x' \neq 0 \} .
	\end{equation*}
	In other words, $\Omega_{\lambda}'$ (resp. $S_{\lambda}'$) coincides with $\Omega_{\lambda}$ (resp. $S_{\lambda}$) minus the $2$-plane $x' = 0$.
	We observe that the hypersurface $S_{\lambda}$ endowed with the Euclidean metric is a smooth Riemannian manifold. We consider the Riemannian manifold $M := \R^{n} \setminus \{ 0 \}$ equipped with the metric
	\begin{equation} \label{eq:met_g}
		g_{i,j}(x) := \delta_{i,j} + \lambda^2 \, \dfrac{x_i \, x_j}{\rho^2} , \qquad \text{for $x \in M$ and $1 \leq i,j \leq n$.}
	\end{equation}
	A direct computation yields the expression of the metric on the cotangent bundle 
	\begin{equation*}
		\tilde{g}_{i,j}(x) = \delta_{i,j} - \dfrac{\lambda^2}{1+\lambda^2} \, \dfrac{x_i \, x_j}{\rho^2} , \qquad \text{for $x \in M$ and $1 \leq i,j \leq n$.}
	\end{equation*}
	One can immediately check that $\cI : M \to S_{\lambda}$, $\cI(x) := (x,\lambda |x|)$, establishes an isometry between $S_{\lambda}$ and $M$. Moreover, $\cI$ restricts to an isometry between $M^0 := \{ x \in M \, : \, x_1 = 0 \}$ and $S^0_{\lambda} := S_{\lambda} \cap H_{\lambda}$.
	Consider the multi-index $\alpha = (2,\dots,n)$. Recalling notation \eqref{eq:multlin_not}, we introduce the $(n-2)$-form $\psi_0 \in \cA^{n-2}(M)$ defined by
	\begin{equation} \label{eq:def_psi0}
		\psi_0(x) := \dfrac{\sqrt{1 + \lambda^2}}{n-1} \, \sum_{i=2}^n \, (-1)^i \, x_i \, dx_{\alpha}^{i} , \qquad \text{for $x \in M$.}
	\end{equation}
	Additionally, denoting by $M' := \{ x \in M \, : \, x' \neq 0 \}$,
	we let
	\begin{equation} \label{eq:def_theta}
		\theta := \arctan(x_1/r) \in (-\pi/2,\pi/2) , \qquad \text{for $x \in M'$.}
	\end{equation}
	In the following technical result we gather some useful identities.
	\begin{lemma}
		The following identities hold:
		\begin{align}
			& \hspace{2.6cm} \det g = 1 + \lambda^2 \label{eq:det_g} \\[0.2cm]
			& |d \rho|_g = \dfrac{1}{\sqrt{1 + \lambda^2}} \qquad |d \theta|_g = \dfrac{1}{\rho} \qquad \langle d \rho , d \theta \rangle_g = 0 . \label{eq:norm_drho_dalpha} \\[0.2cm]
			& \hspace{2cm} |\psi_0|_g = \dfrac{r}{n-1} \sqrt{1 + \lambda^2} \label{eq:norm_psi0} .
		\end{align}
	\end{lemma}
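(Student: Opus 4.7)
The plan is to verify each identity by direct computation, exploiting the rank-one structure of both $g$ and $\tilde{g}$ as perturbations of the identity. For \eqref{eq:det_g}, write $g = I_n + \lambda^2 u u^T$ with $u := x/\rho$ a unit vector; the matrix determinant lemma gives $\det g = 1 + \lambda^2 |u|^2 = 1 + \lambda^2$ at once.

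For \eqref{eq:norm_drho_dalpha}, I first compute $d\rho = \sum_i (x_i/\rho)\,dx_i$ and, from $\theta = \arctan(x_1/r)$,
\[
d\theta = \frac{r}{\rho^2}\,dx_1 - \frac{x_1}{r\rho^2}\sum_{i=2}^n x_i\,dx_i .
\]
Denoting by $a_i$ the coefficients of $d\theta$, the key observation is that $\theta$ is $0$-homogeneous in $x$, so Euler's identity yields $\sum_i x_i a_i = 0$. Since the only deviation of $\tilde{g}$ from the Euclidean inner product is the rank-one term proportional to $x_i x_j/\rho^2$, that correction vanishes in every $g$-pairing involving $d\theta$. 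One then reads off $|d\theta|_g^2 = \sum_i a_i^2 = (r^2 + x_1^2)/\rho^4 = 1/\rho^2$, $\langle d\rho, d\theta\rangle_g = 0$, and $|d\rho|_g^2 = 1 - \lambda^2/(1+\lambda^2) = 1/(1+\lambda^2)$.

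For \eqref{eq:norm_psi0}, set $\omega_0 := \sum_{i=2}^n (-1)^i x_i\,dx_\alpha^i$, so that $\psi_0 = \frac{\sqrt{1+\lambda^2}}{n-1}\,\omega_0$. Unfolding definition \eqref{eq:metric_forms} gives
\[
|\omega_0|_g^2 = \sum_{i,j=2}^n (-1)^{i+j} x_i x_j \det \bigl(\tilde{g}_{kl}\bigr)_{k \in \alpha\setminus\{i\},\, l \in \alpha\setminus\{j\}} .
\]
Let $B$ be the $(n-1)\times(n-1)$ principal submatrix of $\tilde{g}$ indexed by $\{2,\ldots,n\}$. Its $(i-1,j-1)$-minor equals $(-1)^{i+j}\det(B)\,(B^{-1})_{j-1,i-1}$ by the adjugate identity, so the alternating signs cancel and $|\omega_0|_g^2 = \det(B)\,(x')^T B^{-1} x'$. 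Since $B = I_{n-1} - c\,u'(u')^T$ with $c := \lambda^2/(1+\lambda^2)$ and $u' := x'/\rho$, the matrix determinant lemma and the Sherman--Morrison formula yield closed forms for $\det(B)$ and $B^{-1}$, and a short simplification collapses the product to $r^2$. The factor $\sqrt{1+\lambda^2}/(n-1)$ in passing from $\omega_0$ to $\psi_0$ then gives \eqref{eq:norm_psi0}. The main technical step is this last reduction: turning the double alternating sum of $(n-2)\times(n-2)$ minors of $\tilde{g}$ into the single quadratic form $\det(B)\,(x')^T B^{-1} x'$; once that is in place, the rank-one structure resolves everything.
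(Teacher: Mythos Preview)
Your argument is correct, and for \eqref{eq:norm_psi0} it is essentially the paper's own proof: both identify $\langle dx_\alpha^i, dx_\alpha^j\rangle_g$ with the cofactors of the principal submatrix $B=\tilde g_1$ via the adjugate, and then evaluate $\det(B)$ and $B^{-1}$ using the rank-one structure. The differences lie in \eqref{eq:det_g} and \eqref{eq:norm_drho_dalpha}. The paper computes $\det g$ as $|\partial_1\wedge\cdots\wedge\partial_n|^2$ by embedding the frame $\partial_i=e_i+\lambda\,(x_i/|x|)\,e_t$ in $\R^{n+1}$, and verifies \eqref{eq:norm_drho_dalpha} by expanding each pairing in full against $\tilde g_{ij}$. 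Your route is tighter: the matrix determinant lemma on $g=I_n+\lambda^2 uu^T$ dispatches \eqref{eq:det_g} in one line, and the homogeneity observation $\sum_i x_i\,\partial_i\theta=0$ kills the rank-one correction in every pairing involving $d\theta$, so that $|d\theta|_g$ and $\langle d\rho,d\theta\rangle_g$ reduce to their Euclidean values without any algebra. Both approaches are elementary and lead to the same identities; yours just exploits the rank-one/homogeneous structure more systematically and avoids the longer expansions.
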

	
	\begin{proof}
		Fix $x \in M$, and, for $1 \leq i \leq n$, let
		\begin{equation*}
			\de_i = e_i + A_i \, e_t \qquad \text{where} \qquad A_i := \lambda \, \dfrac{x_i}{|x|} \, .
		\end{equation*}
		Standard properties of the determinant guarantee that $\det g = |\de_1 \wedge \dots \wedge \de_n|^2$. Define the multi-index $\beta = (1,\dots,n)$. We have
		\begin{align} \label{eq:wedge}
			\de_1 \wedge \dots \wedge \de_n & = e_{\beta} + \sum_{i=1}^{n} (-1)^{n-i} A_i \, e^{i}_{\beta} \wedge e_t .
		\end{align}
		Now we observe that the wedge products appearing in \eqref{eq:wedge} are orthonormal each other, hence
			$|\de_1 \wedge \dots \wedge \de_{n}|^2 = 1+\sum_{i = 1}^nA_i^2 = 1 + \lambda^2 ,$
		and this proves \eqref{eq:det_g}.
		
		\noindent
		Let us now show the validity of \eqref{eq:norm_drho_dalpha}. We start observing that, by the definitions of $\rho$ and $\theta$,
		\begin{equation} \label{eq:drho_dtheta}
			d \rho = \sum_{i=1}^n \dfrac{x_i}{\rho} \, dx_i \qquad \text{and} \qquad d \theta = \dfrac{1}{\rho^2} \left( r \, dx_1 - \dfrac{x_1}{r} \sum_{i=2}^n x_i \, dx_i \right) .
		\end{equation}
		Hence we have
		\begin{equation*}
			\begin{split}
				|d\rho|_g^2 & = \sum_{i,j=1}^n \dfrac{x_i x_j}{\rho^2} \, \langle dx_i \, , \, dx_j \rangle_g \\
				& = \sum_{i,j=1}^n \dfrac{x_i x_j}{\rho^2} \left( \delta_{i,j} - \dfrac{\lambda^2}{1 + \lambda^2} \dfrac{x_i x_j}{\rho^2} \right) \\
				& = \dfrac{1}{\rho^2} \sum_{i=1}^n x_i^2 - \dfrac{\lambda^2}{\rho^4(1 + \lambda^2)} \left( \sum_{i = 1}^n x_i^2 \right) \left( \sum_{j = 1}^n x_j^2 \right) \\
				& = \dfrac{1}{1 + \lambda^2} ;
			\end{split}
		\end{equation*}
		\begin{equation*}
			\begin{split}
				|d \theta|_g^2 & = \dfrac{1}{\rho^4} \left( r^2 |dx_1|_g^2 - 2 x_1 \sum_{i=2}^n x_i \, \langle dx_i \, , \, dx_1 \rangle_g + \dfrac{x_1^2}{r^2} \sum_{i , j = 2}^n x_i x_j \, \langle dx_i \, , \, dx_j \rangle_g  \right) \\
				& = \dfrac{1}{\rho^4} \left( r^2 \left( 1 - \dfrac{\lambda^2}{1 + \lambda^2} \dfrac{x_1^2}{\rho^2} \right) + 2 x_1 \sum_{i=2}^n \dfrac{\lambda^2}{1 + \lambda^2} \dfrac{x_1 x_i^2}{\rho^2} + x_1^2 \left( 1 - \dfrac{\lambda^2}{1 + \lambda^2} \dfrac{r^2}{\rho^2} \right) \right) \\
				& = \dfrac{1}{\rho^4} \left( r^2 + x_1^2 \right) \\
				& = \dfrac{1}{\rho^2} ;
			\end{split}
		\end{equation*}
		\begin{equation*}
			\begin{split}
				\langle d \rho \, , \, d \theta \rangle_g & = \dfrac{1}{\rho^2} \left\langle \sum_{i=1}^n \dfrac{x_i}{\rho} \, dx_i \, , \, r \, dx_1 - \dfrac{x_1}{r} \sum_{i=2}^n x_i \, dx_i \right\rangle_g \\
				& = \dfrac{1}{\rho^2} \left( \dfrac{r}{\rho} \, x_1 \, |dx_1|_g^2 +\left( \dfrac{r}{\rho} - \dfrac{x_1^2}{r \rho} \right)\sum_{i=2}^n x_i \, \langle dx_1 \, , \, dx_i \rangle_g - \dfrac{x_1}{r \rho} \sum_{i,j=2}^n x_i x_j \, \langle dx_i \, , \, dx_j \rangle_g \right) \\
				& = \dfrac{1}{\rho^2} \left( \dfrac{r}{\rho} \, x_1 \left( 1 - \dfrac{\lambda^2}{1 + \lambda^2} \dfrac{x_1^2}{\rho^2} \right) - \dfrac{\lambda^2}{1+\lambda^2} \left( \dfrac{r}{\rho} - \dfrac{x_1^2}{r \rho} \right) \sum_{i=2}^{n} \dfrac{x_1 x_i}{\rho^2} - \dfrac{x_1}{\rho} \, r \left( 1 - \dfrac{\lambda^2}{1 + \lambda^2} \dfrac{r^2}{\rho^2} \right) \right) \\
				& = \dfrac{1}{\rho^2} \left( \dfrac{r}{\rho} x_1 \left( 1 - \dfrac{\lambda^2}{1 + \lambda^2} \dfrac{x_1^2}{\rho^2} -  \right) - \dfrac{\lambda^2}{1+\lambda^2} \left( \dfrac{r}{\rho} - \dfrac{x_1^2}{r \rho} \right) x_1 \, \dfrac{r^2}{\rho^2} - \dfrac{x_1}{\rho} \, r \left( 1 - \dfrac{\lambda^2}{1 + \lambda^2} \dfrac{r^2}{\rho^2} \right) \right) \\[0.2cm]
				& = 0 .
			\end{split}
		\end{equation*}
		
		\noindent
	    Ultimately, we prove \eqref{eq:norm_psi0}.
		We have
		\begin{equation} \label{eq:norm_psi0_pf}
			\begin{split}
				|\psi_0|_g^2 & = \langle \psi_0 , \psi_0 \rangle_g \\
				& = \dfrac{1 + \lambda^2}{(n-1)^2} \left\langle \sum_{i = 2}^n (-1)^i \, x_i \, dx_{\alpha}^i \, , \sum_{j = 2}^n (-1)^j \, x_j \, dx_{\alpha}^j \right\rangle_{g} \\
				& = \dfrac{1 + \lambda^2}{(n-1)^2} \sum_{i,j = 2}^n (-1)^{i+j} \, x_i \, x_j \, \langle dx_{\alpha}^i \, , dx_{\alpha}^j \rangle_g .
			\end{split}
		\end{equation}
		Denote by $\tilde{g}_1$ the $(n-1)\times(n-1)$ submatrix of $\tilde{g}$ obtained by cancelling the first row and the first column of $\tilde{g}$.
		By the definition of the metric on $\bigwedge^k T^{\ast} M$ given in \eqref{eq:metric_forms} (in this case $k = n - 2$) and the properties of the adjugate matrix (see \cite[p. 73]{Shafarevich_Remizov_book}), we have
		\begin{equation} \label{eq:scalar_n-2_form}
			\begin{split}
				\langle dx_{\alpha}^i \, , dx_{\alpha}^j \rangle_g = (-1)^{i+j} \, (\tilde{g}_1^{-1})_{i,j} \, \det (\tilde{g}_1) .
			\end{split}
		\end{equation}
		A mere computation yields
		\begin{equation} \label{eq:term_g[1,1]}
			(\tilde{g}_1^{-1})_{i,j} = \delta_{i,j} + \dfrac{\lambda^2}{\rho^2 + \lambda^2 \, x_1^2} \, x_i \, x_j ,
		\end{equation}
		The same calculation performed above to compute $\det g$ guarantees also that
		\begin{equation} \label{eq:det_g[1,1]}
			\det(\tilde{g}_1)^{-1} = \det (\tilde{g}_1^{-1}) = 1 + \dfrac{\lambda^2 \, r^2}{\rho^2 + \lambda^2 \, x_1^2} . 
		\end{equation}
		Consequently, inserting \eqref{eq:det_g[1,1]} and \eqref{eq:term_g[1,1]} inside \eqref{eq:scalar_n-2_form}, by \eqref{eq:norm_psi0_pf}, we infer that
		\begin{equation*}
			\begin{split}
				|\psi_0|_g^2 & = \dfrac{1 + \lambda^2}{n-1} \sum_{i,j = 2}^n \, x_i \, x_j \, \left( \delta_{i,j} + \dfrac{\lambda^2}{\rho^2 + \lambda^2 \, x_1^2} \, x_i \, x_j \right) \left( 1 + \dfrac{\lambda^2 \, r^2}{\rho^2 + \lambda^2 \, x_1^2} \right)^{-1} 
				\\
				& = \dfrac{1 + \lambda^2}{n-1} \, r^2 ,
			\end{split}
		\end{equation*}
		that is precisely \eqref{eq:norm_psi0}.
	\end{proof}

\section{Construction of the calibration}
\label{sec:constr_cal}
The main result of this section is the following Theorem.
\begin{theorem} \label{thm:excal}
	For $n \geq 4$, consider the threshold $\bar{\lambda}(n)$ defined in \eqref{eq:def_barlambda}.
	Then, if $0<\lambda \leq \bar \lambda(n)$, there exists a vector field $Z : \Omega_{\lambda}' \cup S_{\lambda}' \to \R^{n+1}$ such that:
	\begin{itemize}
		\item[(i)]
		$Z\left( x,t \right) = Z \left(x, \lambda |x| \right)$, for all $(x,t) \in \Omega_{\lambda}' \cup S_{\lambda}'$;
		\medskip
		\item[(ii)] $|Z(x,t)| \leq 1$, for all $(x,t) \in \Omega_{\lambda}' \cup S_{\lambda}'$;
		\medskip
		\item[(iii)] $Z(x,t) = e_1$, for all $(x,t) \in \overline{H}_{\lambda} \cap (\Omega_{\lambda}' \cup S_{\lambda}')$;
		\medskip
		\item[(iv)] $Z (x,t) \in T_{(x,t)} S_{\lambda}$, for all $(x,t) \in S_{\lambda}'$;
		\medskip
		\item[(v)] 
		$Z$ is smooth and $\div Z = 0$ in $\Omega_{\lambda}'$.
	\end{itemize}
\end{theorem}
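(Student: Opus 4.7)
The plan is to construct $Z$ in two stages: first, find a smooth, divergence-free vector field $\tilde Y$ on the Riemannian manifold $(M', g)$ with $|\tilde Y|_g \le 1$ and $\tilde Y|_{M^0 \cap M'} = \partial_1$; second, transfer it to $S_\lambda'$ via the isometry $\cI$ from Section \ref{sec:setting} and extend it vertically to $\Omega_\lambda'$. Once $\tilde Y$ is produced, I set
$$Z(x,t) := (\cI_\ast \tilde Y)(x, \lambda|x|) \qquad \text{for all } (x,t) \in \Omega_\lambda' \cup S_\lambda'.$$
Since $\cI_\ast \partial_i = e_i + \lambda (x_i/|x|)\, e_t$, property (i) is automatic, (iv) follows because $\cI_\ast$ maps $TM$ into $TS_\lambda$, and (iii) reduces to $\cI_\ast \partial_1 = e_1$ on $\{x_1=0\}$. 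A direct expansion in the Euclidean frame yields $|Z|^2 = \sum_i \tilde Y_i^2 + \lambda^2(\sum_i \tilde Y_i x_i/|x|)^2 = |\tilde Y|_g^2 \le 1$, which is (ii). For (v), since $Z$ is $t$-independent, $\div Z = \sum_{i=1}^n \partial_{x_i} \tilde Y_i$; using $\det g = 1+\lambda^2$ (see \eqref{eq:det_g}) in \eqref{eq:div_loc_coords} this coincides with $\div^M \tilde Y = 0$.

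To build $\tilde Y$, I exploit the $SO(n-1)\times\R_+$-invariance of both $(M,g)$ and $M^0$ with the ansatz
$$\tilde Y(x) = a(\theta)\,\partial_1 + \frac{b(\theta)}{r} \sum_{i=2}^n x_i \partial_i, \qquad a(0)=1,\,\, b(0)=0,$$
where $\theta$ is as in \eqref{eq:def_theta}. A direct computation using \eqref{eq:div_loc_coords} gives
$$|\tilde Y|_g^2 = a^2 + b^2 + \lambda^2(a\sin\theta + b\cos\theta)^2,$$
while $\div^M \tilde Y = 0$ reduces to the single linear ODE
$$\cos^2\theta\,a'(\theta) - \sin\theta\cos\theta\,b'(\theta) + (n-2)\,b(\theta) = 0.$$
I fix the remaining gauge freedom by imposing the pointwise saturation $|\tilde Y|_g = 1$. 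In the rotated coordinates $u := a\sin\theta + b\cos\theta$ and $v := a\cos\theta - b\sin\theta$, the constraint reads $(1+\lambda^2)u^2 + v^2 = 1$, which I parameterize as $(u,v) = \bigl(\sin\phi/\sqrt{1+\lambda^2},\,\cos\phi\bigr)$ with $\phi(0) = 0$. Substitution reduces the divergence equation to the scalar ODE
$$\phi'(\theta) = \frac{n-1}{\sqrt{1+\lambda^2}} - (n-2)\,\cot\phi(\theta)\,\tan\theta.$$

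The main obstacle is integrating this ODE through the singular initial point $(\theta,\phi)=(0,0)$. Writing $\phi \sim c\theta$ as $\theta \to 0$, the equation forces $c$ to satisfy
$$c^2 - \frac{n-1}{\sqrt{1+\lambda^2}}\,c + (n-2) = 0,$$
a quadratic whose discriminant is nonnegative precisely when $\lambda^2 \le (n-3)^2/(4(n-2)) = \bar\lambda(n)^2$. This is exactly the threshold \eqref{eq:def_barlambda}, providing a clean algebraic explanation of the hypothesis. Once $\lambda \le \bar\lambda(n)$, I pick the smaller positive root $c$ and integrate the ODE forward and backward to obtain a smooth $\phi$ on $(-\pi/2,\pi/2)$, taking values in $(0,\pi/2)$; the corresponding $(a,b)$ depend smoothly on $\theta$ and give a smooth $\tilde Y$ on $M'$, completing the construction. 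This is the Riemannian-calibration counterpart of the Lawlor--Morgan angle-function technique \cite{morgan2002area,Lawlor_Cones}, and in the differential-forms formulation the $(n-2)$-form $\psi_0$ from \eqref{eq:def_psi0} serves as a natural primitive for the closed calibrating $(n-1)$-form Hodge-dual to $\tilde Y^\flat$, by Lemma \ref{lem:id_Hodge}.
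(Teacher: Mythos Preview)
Your two-stage strategy (build a calibrating field on $M'$, push forward by $\cI$, extend vertically) coincides with the paper's, and your verification of (i)--(v) from the properties of $\tilde Y$ is correct; in particular the identities $|Z|^2=|\tilde Y|_g^2$ and $\div Z=\div^M\tilde Y$ via \eqref{eq:det_g} and \eqref{eq:div_loc_coords} are exactly what the paper uses. Where you genuinely diverge is in the construction of $\tilde Y$. The paper works dually: it writes an \emph{exact} $(n-1)$-form $\omega_h=d(h(u)\psi_0)$, so that divergence-freeness is automatic, and then verifies $|\omega_h|_g\le 1$ by choosing $h(u)=\cos^\gamma(\arctan u)$ explicitly (via the substitution $h=\cos\beta/\cos\theta$ and $\beta_\gamma(\theta)=\sgn\theta\arccos(\cos^{1+\gamma}\theta)$). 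You instead fix the gauge $|\tilde Y|_g=1$ and reduce the divergence condition to the scalar ODE $\phi'=(n-1)/\sqrt{1+\lambda^2}-(n-2)\cot\phi\tan\theta$. Your threshold computation is clean and correct: it is exactly the equality case of the paper's inequality \eqref{eq:condomega_hleq1_rew2}, and the quadratic for the germ $\phi\sim c\theta$ recovers $\bar\lambda(n)$.

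However, there is a real gap. You assert that one can ``pick the smaller positive root $c$ and integrate the ODE forward and backward to obtain a smooth $\phi$ on $(-\pi/2,\pi/2)$'', but this is precisely the hard analytic step and you do not justify it. The ODE is singular at $(\theta,\phi)=(0,0)$; writing $w=\phi/\theta$ gives $\theta w'=-(w-c_+)(w-c_-)/w$, and the Briot--Bouquet analysis shows that it is the \emph{larger} root $c_+$ that yields a unique regular trajectory, while near $c_-$ there is a one-parameter family of solutions with $w-c_-\sim C\theta^{(c_+-c_-)/c_-}$, generically non-smooth. Beyond the origin you must also rule out that $\phi$ hits a multiple of $\pi$ (where $\cot\phi$ blows up) and control the behaviour as $\theta\to\pm\pi/2$ where $\tan\theta\to\pm\infty$; none of this is addressed, and the borderline case $\lambda=\bar\lambda(n)$ (double root) is more delicate still. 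The paper sidesteps all of this by producing $\beta_\gamma$ explicitly and checking the \emph{inequality} $|\omega_h|_g\le 1$ rather than insisting on equality, so no singular ODE needs to be solved. If you want to rescue your route, either supply the full Briot--Bouquet/comparison argument, or drop the saturation $|\tilde Y|_g=1$ and observe that your $(u,v)$-reduction with the inequality $(1+\lambda^2)u^2+v^2\le 1$ is exactly the paper's \eqref{eq:condomega_hleq1_rew1}, at which point the explicit $\beta_\gamma$ finishes the job.
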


A key passage for the proof of Theorem \ref{thm:excal}, is the construction of a calibration for the $(n-1)$-dimensional surface $S^0_{\lambda}$ inside $S_{\lambda}$. This is the subject of the next result.

\begin{theorem} \label{thm:excal_Slambda}
	For $n \geq 4$, let $0 < \lambda \leq \bar{\lambda}(n)$. Then there exists a vector field $Y \in \frX(S_{\lambda}')$ satisfying the following conditions:
	\begin{itemize}
		\item[(i)] $|Y(p)| \leq 1$, for all $p \in S_{\lambda}'$;
		\medskip
		\item[(ii)] $Y(p) = e_1$, for all $p \in S^0_{\lambda}$;
		\medskip
		\item[(iii)] $Y$ is smooth and $\div^{S_{\lambda}} Y = 0$ on $S_{\lambda}'$.
	\end{itemize}
\end{theorem}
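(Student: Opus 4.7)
The plan is to move the problem to the Riemannian manifold $(M,g)$ via the isometry $\cI$: it suffices to construct $\tilde Y \in \frX(M')$ with $|\tilde Y|_g \leq 1$, $\tilde Y|_{M^0} = \de_1$, and $\div^M \tilde Y = 0$, and then to set $Y := \cI_{\ast}\tilde Y$. Properties (i), (ii), (iii) for $Y$ will then follow automatically from the isometry property of $\cI$, the pointwise identity $\cI_{\ast}\de_1|_{x_1=0} = e_1$, and \eqref{eq:prop_divM} respectively.

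I would introduce ``warped-spherical'' coordinates $(\rho,\theta,\omega) \in (0,\infty) \times (-\pi/2,\pi/2) \times S^{n-2}$ on $M'$ with $x_1 = \rho\sin\theta$ and $x' = \rho\cos\theta\cdot\omega$ ($\theta$ matches \eqref{eq:def_theta}). Using $\sum_i x_i\,dx_i = \rho\,d\rho$ inside \eqref{eq:met_g} produces the diagonal warped-product form
\[
g \;=\; (1+\lambda^2)\,d\rho^2 \;+\; \rho^2\bigl(d\theta^2 + \cos^2\theta\cdot\sigma\bigr),
\]
where $\sigma$ denotes the round metric on $S^{n-2}$. By the rotational symmetry in $\omega$ and the scale invariance in $\rho$ of the problem, I look for $\tilde Y$ of the form
\[
\tilde Y \;=\; a(\theta)\,\hat e_{\theta} \;+\; b(\theta)\,\hat e_{\rho}, \qquad \hat e_{\theta} := \tfrac{1}{\rho}\,\de_{\theta},\quad \hat e_{\rho} := \tfrac{1}{\sqrt{1+\lambda^2}}\,\de_{\rho},
\]
so that $|\tilde Y|_g^2 = a^2 + b^2$. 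Using $\sqrt{\det g} = \sqrt{1+\lambda^2}\,\rho^{n-1}\cos^{n-2}\theta\,\sqrt{\det\sigma}$ in \eqref{eq:div_loc_coords} reduces the divergence-free condition to the single first-order ODE
\[
a'(\theta) \;-\; (n-2)\,a(\theta)\tan\theta \;+\; \tfrac{n-1}{\sqrt{1+\lambda^2}}\,b(\theta) \;=\; 0.
\]

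To saturate the norm constraint $|\tilde Y|_g \equiv 1$, I parametrize $(a,b) = (\cos\phi(\theta),\sin\phi(\theta))$ with $\phi(0)=0$, whereby the ODE becomes
\[
\phi'(\theta) \;=\; \tfrac{n-1}{\sqrt{1+\lambda^2}} \;-\; (n-2)\tan\theta\,\cot\phi(\theta).
\]
This is singular at $\theta = 0$; substituting the ansatz $\phi(\theta) = c\theta + O(\theta^3)$ yields the indicial equation $c^2 - \tfrac{n-1}{\sqrt{1+\lambda^2}}\,c + (n-2) = 0$, whose discriminant is non-negative precisely when $\lambda \leq \bar\lambda(n)$, exactly recovering the threshold. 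For such $\lambda$, I pick the smaller root $c_-$ and invoke a stable-manifold-type argument for singular ODEs to obtain a unique smooth odd solution $\phi$ near $0$ with $\phi'(0) = c_-$. Once $\phi$ is constructed, setting $Y := \cI_{\ast}(\cos\phi\,\hat e_{\theta} + \sin\phi\,\hat e_{\rho})$ gives (i) because $|\tilde Y|_g = 1$, (ii) because $\hat e_{\theta}|_{\theta=0} = \de_1$ and $\cI_{\ast}\de_1 = e_1$ on $S^0_{\lambda}$, and (iii) by smoothness of $\phi$ together with \eqref{eq:prop_divM}. The main obstacle is the global continuation of $\phi$ to all of $(-\pi/2,\pi/2)$ while keeping $\sin\phi \neq 0$ for $\theta \neq 0$; I expect this to follow by trapping $\phi$ between suitable upper and lower barriers constructed from the two indicial solutions $\phi_{\pm}$, making essential use of the fact that both roots $c_{\pm}$ are real and positive exactly under the hypothesis $\lambda \leq \bar\lambda(n)$.
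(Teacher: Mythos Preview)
Your reduction is correct and, in fact, lands on exactly the same one–dimensional problem as the paper: with the substitution $h(u)=\cos\beta(\theta)/\cos\theta$ the paper arrives at the differential \emph{inequality}
\[
\Bigl|\beta'(\theta)+(n-2)\,\tan\theta\,\cot\beta(\theta)\Bigr|\;\le\;\frac{n-1}{\sqrt{1+\lambda^2}},
\]
whose equality case is precisely your ODE for $\phi$, and your indicial analysis reproduces the threshold $\bar\lambda(n)$ for the same reason the paper's optimization over $\gamma$ does. The genuine difference is how the 1D problem is dispatched. By insisting on $|\tilde Y|_g\equiv 1$ you force yourself to solve the \emph{exact} singular ODE and are then left with the two tasks you flag (local smooth existence at $\theta=0$; global continuation). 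The paper instead relaxes to $|\tilde Y|_g\le 1$, which buys an extra degree of freedom and allows a closed-form solution: it exhibits the explicit family $\beta_\gamma(\theta)=\sgn(\theta)\arccos\!\bigl(\cos^{1+\gamma}\theta\bigr)$, checks that the inequality holds globally for $\gamma=\bar\gamma$ precisely when $\lambda\le\bar\lambda(n)$, and reads off $h(u)=\cos^{\bar\gamma}(\arctan u)$, so smoothness and global definition are manifest without any ODE theory. Your barrier idea is the weakest point as written: the ``two indicial solutions $\phi_\pm$'' are only defined \emph{locally} near $\theta=0$, so invoking them as global barriers is circular until you produce explicit global sub/supersolutions. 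The cleanest fix is to note that the paper's $\beta_\gamma$ are exactly such subsolutions of your equality ODE (they satisfy the inequality above with the correct sign), so they trap your $\phi$ away from $0$ on $(0,\pi/2)$; but at that point you have essentially reproduced the paper's construction, and it is simpler to drop the saturation and use $\beta_{\bar\gamma}$ directly.
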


The proof of Theorem \ref{thm:excal_Slambda} can be in part found in \cite{morgan2002area}.
For the sake of clarity, we provide a complete proof here.

\begin{proof}
	Let $\alpha$ be the multi-index $(2,\dots,n)$, and consider the $(n-1)$-form $\omega_0 = \sqrt{1 + \lambda^2} \, dx_{\alpha}$. A direct computation shows that, in $M'$,
	\begin{equation} \label{eq:omega_0}
		\omega_0 = d \psi_0 = \dfrac{n-1}{r} \, dr \wedge \psi_0 , 
	\end{equation}
	where $\psi_0$ is as in \eqref{eq:def_psi0}. Additionally, set $u := x_1/r$. Given a differentiable function $h$, we define
	\begin{equation*}
		\psi_h(x) := h(u) \, \psi_0(x) \qquad \text{and} \qquad \omega_h(x) := d\psi_h(x) , \qquad \text{for $x \in M'$.} 
	\end{equation*}
	By the properties of the exterior derivative, we have
	\begin{equation} \label{eq:omega_h}
		\omega_{h} = d h \wedge \psi_0 + h \, \omega_0 \, ,
	\end{equation}
	where
	\begin{equation} \label{eq:dh}
		dh = \dfrac{h'(u)}{r} \, dx_1 - \dfrac{x_1}{r^2} \, h'(u) \, dr \, .
	\end{equation}
	\\
	Putting together \eqref{eq:omega_h}, \eqref{eq:dh} and using \eqref{eq:omega_0}, we infer that
	\begin{align} \label{eq:scompomega_h}
		\omega_h(x) & = \left( \dfrac{h'(u)}{r} \, dx_1 - \dfrac{x_1}{r^2} \, h'(u) \, dr \right) \wedge \psi_0(x) + h(u) \, \omega_0(x) \nonumber \\
		& = \left[ \dfrac{r}{n-1} \left( \dfrac{h'(u)}{r} \, dx_1 - \dfrac{x_1}{r^2} \, h'(u) \, dr \right) + h(u) \, dr \right] \wedge \left( \dfrac{n-1}{r} \, \psi_0(x) \right) \\
		& = \left[ \left( h(u) - \dfrac{x_1}{(n-1) \, r} h'(u) \right) dr + \dfrac{h'(u)}{n-1} \, dx_1 \right] \wedge \left( \dfrac{n-1}{r} \, \psi_0(x) \right) \, . \nonumber
	\end{align}
	
	\noindent
	\textbf{Step 1.}
	We look for a continuously differentiable function $h : \R \to \R$ such that:
	\begin{equation} \label{eq:req_h}
		h(0) = 1 , \qquad h'(0) = 0 \qquad \text{and} \qquad
		|\omega_{h}(x)|_g \leq 1 \quad \text{for all $x \in M'$.}
	\end{equation}
	Applying the Cauchy-Schwartz inequality for wedge products (see \cite[pag. 32]{Federer_book}, and note that $1$-forms are always simple), by \eqref{eq:scompomega_h}, we obtain
	\begin{equation*}
		|\omega_h(x)|_g \leq \left| \left( h(u) - \dfrac{x_1}{(n-1) r} h'(u) \right) dr + \dfrac{h'(u)}{n-1} dx_1 \right|_g \cdot \, \, \left| \dfrac{n-1}{r} \psi_0(x) \right|_g \, .
	\end{equation*}
	Thanks to \eqref{eq:norm_psi0}, to have $|\omega_h|_g \leq 1$, it suffices to impose
	\begin{equation} \label{eq:condomega_hleq1}
		\left| \left( h(u) - \dfrac{x_1}{(n-1) \, r} h'(u) \right) dr + \dfrac{h'(u)}{n-1} \, dx_1 \right|_g^2 \leq \dfrac{1}{1 + \lambda^2} \, .
	\end{equation}
	We now observe that, taking $\theta \in (-\pi/2,\pi/2)$ as in \eqref{eq:def_theta}, one has
	\begin{equation*}
		u = \tan \theta , \qquad x_1 = \rho \sin \theta \qquad \text{and} \qquad r = \rho \cos \theta .
	\end{equation*}
	The identities \eqref{eq:norm_drho_dalpha} permit us to rewrite \eqref{eq:condomega_hleq1} in the following way:
	\begin{equation} \label{eq:condomega_hleq1_rew1}
		h(u)^2 + (1 + \lambda^2) \left[ \dfrac{(1 + u^2) \, h'(u)}{n-1} - u \, h(u) \right]^2 \leq 1 + u^2 \, .
	\end{equation}
	Thus we are reduced to find a solution $h$ to the Cauchy problem for the differential inequality \eqref{eq:condomega_hleq1_rew1} with initial conditions $h(0) = 1$, $h'(0) = 0$.
	In order to solve \eqref{eq:condomega_hleq1_rew1}, we apply the ansatz
	\begin{equation*}
		h(u) = \dfrac{\cos (\beta(\arctan u))}{\cos (\arctan u)} = \dfrac{\cos (\beta(\theta))}{\cos \theta} \, ,
	\end{equation*}
	\\
	where $\beta: (-\pi/2,\pi/2) \to (-\pi/2,\pi/2)$ is a continuously differentiable function with $\beta(0) = 0$. By this approach, $h(0) = 1$. Moreover, we have
	\begin{equation} \label{eq:h'}
		h'(u) = \dfrac{ -\sin(\beta (\theta)) \beta'(\theta) \cos \theta + \cos(\beta(\theta)) \sin \theta }{(1 + u^2) \cos^2(\theta)} \, ,
	\end{equation}
	and then, in particular, $h'(0) = 0$. We can now rewrite the differential inequality \eqref{eq:condomega_hleq1_rew1} in terms of $\beta$ and $\theta$. Thanks to \eqref{eq:h'}, we have
	\begin{align*}
		\dfrac{(1 + u^2) \, h'(u)}{n-1} - u \, h(u) & = \dfrac{ -\sin (\beta (\theta)) \beta'(\theta) \cos \theta + \cos(\beta(\theta)) \sin \theta }{(n-1) \cos^2(\theta)} - \tan (\theta) \dfrac{\cos (\beta(\theta))}{\cos \theta} \\
		& = \dfrac{1}{n-1} \left( - \beta'(\theta) \dfrac{\sin(\beta (\theta))}{\cos \theta} + \dfrac{\tan \theta}{\tan(\beta(\theta))} \dfrac{\sin(\beta (\theta))}{\cos \theta} \right) - \dfrac{\tan \theta}{\tan(\beta(\theta))} \dfrac{\sin(\beta(\theta))}{\cos \theta} \\
		& = \dfrac{\sin(\beta(\theta))}{\cos \theta} \left( - \dfrac{\beta'(\theta)}{n-1} + \dfrac{2-n}{n-1} \dfrac{\tan \theta}{\tan(\beta(\theta))} \right) \, .
	\end{align*}
	Thus, by the definition of $h$ and the identity $\cos^{-2}(\theta) = 1 + u^2$, \eqref{eq:condomega_hleq1_rew1} becomes
	\begin{equation} \label{eq:condomega_hleq1_rew2}
		\left| \beta'(\theta) + (n-2) \dfrac{\tan \theta}{\tan(\beta(\theta))} \right| \leq \dfrac{n-1}{\sqrt{1 + \lambda^2}} , \qquad \text{for $\theta \in (-\pi/2,\pi/2)$.}
	\end{equation}
	We want to find a solution $\beta$ to the above differential inequality with initial condition $\beta(0) = 0$. We claim 
	that such a solution can be found among the functions $\beta_{\gamma} : (-\pi/2,\pi/2) \to (-\pi/2,\pi/2)$ defined by
	\begin{equation*} 
		\beta_{\gamma}(\theta) := \sgn (\theta) \arccos \left( \cos^{1 + \gamma} (\theta) \right) \qquad \text{for $\gamma > 0$,}
	\end{equation*}
	i.e., that there exists $\gamma > 0$ such that $\beta_{\gamma}$ satisfies \eqref{eq:condomega_hleq1_rew2} and $\beta_{\gamma}(0) = 0$.
	Note that, for any $\gamma > 0$, $\beta_{\gamma}(0) = 0$. Let us rewrite \eqref{eq:condomega_hleq1_rew2} for $\beta_{\gamma}$.
	We observe that $\beta_{\gamma}$ is differentiable, and
	\begin{equation*}
		\beta_{\gamma}'(\theta) = \sgn (\theta) \dfrac{(1 + \gamma) \cos^{\gamma} \theta \, \sin \theta}{\sqrt{1 - \cos^{2(1 + \gamma)}(\theta)}} \, .
	\end{equation*}
	Moreover, we have
	\begin{equation*}
		\dfrac{1}{\tan (\beta_\gamma(\theta))} = \dfrac{\cos (\beta_{\gamma}(\theta))}{\sgn(\beta_{\gamma}(\theta)) \sqrt{1 - \cos^2 (\beta_{\gamma}(\theta))}} = \dfrac{\sgn(\theta) \cos^{1 + \gamma}(\theta)}{\sqrt{1 - \cos^{2(1 + \gamma)}(\theta)}} ,
	\end{equation*}
	and consequently,
	\begin{equation*}
		\left| \beta'(\theta) + (n-2) \dfrac{\tan \theta}{\tan(\beta(\theta))} \right| = (n-1+\gamma) \left| \dfrac{\tan \theta}{\tan(\beta_{\gamma}(\theta))} \right| . 
	\end{equation*}
	Hence, for this family of functions $\beta_{\gamma}(\theta)$, \eqref{eq:condomega_hleq1_rew2} transforms into
	\begin{equation} \label{eq:condomega_hleq1_rew2_beta}
		\left( 1 + \dfrac{\gamma}{n-1} \right)^2 \tan^2 \theta \leq \dfrac{\tan^2 (\beta_{\gamma}(\theta))}{1 + \lambda^2} \, , \qquad \text{for $\theta \in (-\pi/2,\pi/2)$.}
	\end{equation}
	Let us set $z := \cos^{-2} (\theta)$. In this way, $\cos^{-2}(\beta_{\gamma}(\theta)) = z^{1 + \gamma}$, and condition \eqref{eq:condomega_hleq1_rew2_beta} becomes
	\begin{equation} \label{eq:condomega_hleq1_rew2_zeta}
		\left( 1 + \dfrac{\gamma}{n-1} \right)^2 \leq \dfrac{1}{1 + \lambda^2} \, \dfrac{z^{1 + \gamma} - 1}{z - 1} \, , \qquad \text{for $z \geq 1$.}
	\end{equation}
	Let $w(z) := (z^{1 + \gamma} - 1)/(z - 1)$. For $z \geq 1$, $w$ is monotonically increasing, hence it suffices to test condition \eqref{eq:condomega_hleq1_rew2_zeta} only as $z \to 1^+$. Since $\lim_{z \to 1^+} w(z) = 1 + \gamma$, we get the condition
	\begin{equation} \label{eq:condomega_hleq1_rew2_zlim}
		\left( 1 + \dfrac{\gamma}{n-1} \right)^2 - \dfrac{1 + \gamma}{1 + \lambda^2} \, \leq \, 0 \, .
	\end{equation}
	In particular, \eqref{eq:condomega_hleq1_rew2_zlim} implies \eqref{eq:condomega_hleq1_rew2_zeta}. We now observe that the function on the LHS of \eqref{eq:condomega_hleq1_rew2_zlim} has a global minimizer in
	\begin{equation*}
		\bar{\gamma} := (n-1) \left( \dfrac{n-1}{2(1 + \lambda^2)} - 1 \right) .
	\end{equation*}
	Let $\gamma = \bar{\gamma}$. If \eqref{eq:condomega_hleq1_rew2_zlim} is satisfied, by construction, the function $\beta = \beta_{\gamma}$ solves \eqref{eq:condomega_hleq1_rew2} and $\beta(0) = 0$.
	A mere computation gives that \eqref{eq:condomega_hleq1_rew2_zlim} holds for $\gamma = \bar{\gamma}$ if and only if
	\begin{equation} \label{eq:condlambda}
		n \geq 4 \qquad \text{and} \qquad 0 < \lambda \leq \bar{\lambda}(n) := \dfrac{1}{2} \dfrac{n-3}{\sqrt{n-2}} \, ,
	\end{equation}
	and this concludes the proof of Step 1.
	In particular, when \eqref{eq:condlambda} is fulfilled, the function $h$ for which \eqref{eq:req_h} holds true is given by
	\begin{align*}
		h(u) 
		& = \cos^{\gamma}(\arctan u), \qquad \text{where} \qquad \gamma = (n-1) \left( \dfrac{n-1}{2(1 + \lambda^2)} - 1 \right) .
	\end{align*}
    Notice that $h$ is smooth, and so $\omega_{h} \in \cA^{n-1}(M)$ is smooth.
	
	\smallskip
	\noindent
	\textbf{Step 2.} For $x \in M'$, define
	\begin{equation*}
		X = (-1)^{n-1}(\star \, \omega_{h})^{\#} \in \frX(M') \qquad \text{and} \qquad Y = \cI_{\ast} X \in \frX(S_{\lambda}') .
	\end{equation*}
	We now show that $Y$ satisfies (i), (ii) and (iii).
	By construction, $|\omega_{h}|_g \leq 1$, and since $\star$ and $\cI$ are isometries, we trivially infer that $|Y(p)| \leq 1$, for all $p \in S_{\lambda}'$, that is (i).
	Moreover, by \eqref{eq:req_h} and \eqref{eq:scompomega_h}, since $u = 0$ as $x_1 = 0$ , we have $\omega_{h} = \omega_0 = \sqrt{1 + \lambda^2} \, dx_2\wedge\dots\wedge dx_n$ on $M^0$. It is easy to check that, by \eqref{eq:id_Hodge} (note that $\nu = \sqrt{\det g} \, dx_1 \wedge \dots \wedge dx_n = \sqrt{1+\lambda^2} \, dx_1 \wedge \dots \wedge dx_n$), the following equality holds
	\begin{equation*}
		\star \left(\sqrt{1 + \lambda^2} \, dx_2 \wedge \dots \wedge dx_n\right) = (-1)^{n-1} e_1^{\flat} ,
	\end{equation*}
	hence $X(x) = e_1$, for every $x \in M^0$. By the definition of $\cI$, we have $Y(p) = e_1$, for all $p \in S^0_{\lambda}$, that is precisely (ii).
	Finally, let us prove (iii). The smoothness of $Y$ immediately follows from that of $\omega_{h}$. Now, by (i) of Lemma \ref{lem:id_Hodge}, $\star X^{\flat} = i_X \nu$,
	hence we have
	\begin{equation*}
		i_X \nu = (-1)^{n-1} \star^2 \omega_{h} = \omega_{h}.
	\end{equation*}
	By (ii) of Lemma \ref{lem:id_Hodge} and the definition of $\omega_h$, we deduce that
	\begin{equation*}
		\div^M X = \star d \omega_h = \star d d \psi_{h} = 0 ,
	\end{equation*}
	and then (iii) immediately follows from \eqref{eq:prop_divM}.
\end{proof}

Let us now proceed with the proof of Theorem \ref{thm:excal}. The idea is to extend the vector field $Y$ of Theorem \ref{thm:excal_Slambda} to $\Omega_{\lambda}'$ by vertically projecting onto $S_{\lambda}'$. This operation will produce a divergence-free vector field in $\Omega_{\lambda}'$.

\begin{proof}[Proof (of Theorem \ref{thm:excal}).]
	Consider the vector field
	\begin{equation*}
		Z(x,t) := Y(x,\lambda|x|) \, .
	\end{equation*}
	Since $Y$ is defined on $S_{\lambda}'$, it is clear that $Z$ is defined in $\Omega_{\lambda}'$. By construction, (i) of Theorem \ref{thm:excal} holds, while the statements (ii), (iii), (iv) immediately follow from the properties of $Y$. It remains to demonstrate (v). For sure, $Z$ is smooth because $Y$ is. We need to show that $\div Z = 0$ in $\Omega_{\lambda}'$. Consider the (non-orthonormal) frame $\de_1,\dots,\de_n,\de_t$ for $T \Omega_{\lambda}'$ defined by
	\begin{equation*}
		\de_i(x,t) = \de_i(x) = e_i + \lambda \, \dfrac{x_i}{|x|} \, e_t \qquad \text{for $1 \leq i \leq n$} \qquad \text{and} \qquad \de_t = e_t .
	\end{equation*}
	In addition, let $\hat{g}$ the matrix whose entries are the scalar products $\langle \de_i , \de_j \rangle$, $ \langle \de_i , \de_t \rangle $, for $1 \leq i,j \leq n$. By \eqref{eq:wedge}, the determinant of $\hat{g}$ satisfies
	\begin{align*}
		\det \hat g & = |\de_1 \wedge \dots \wedge \de_n \wedge e_t|^2 \\
		& = |e_1 \wedge \dots \wedge e_n \wedge e_t|^2 \\
		& = 1 ,
	\end{align*}
	and, in particular, is constant. Represent now $Y$ and $Z$ in coordinates with respect to $\de_1,\dots,\de_n$ and $\de_1,\dots,\de_n,\de_t$ respectively. Then, for suitable $Y_i \in C^{\infty}(S_{\lambda}')$, $Z_i, Z_t \in C^{\infty}(\Omega_{\lambda}')$, 
	\begin{equation*}
	Y(x,\lambda|x|) = \sum_{i=1}^n Y_i(x,\lambda|x|) \de_i(x) \qquad \text{and} \qquad Z(x,t) = \sum_{i=1}^n Z_i(x,t) \de_i(x) + Z_t(x,t) \de_t , 
	\end{equation*}
    for all $x \in \R^n$ and $t \geq \lambda|x|$. By construction, we have
	\begin{equation*}
		Z_i(x,t) = Y_i(x,\lambda|x|) \quad \text{for any $1 \leq i \leq n$} \qquad \text{and} \qquad Z_t \equiv 0.
	\end{equation*}
	Consequently, since by \eqref{eq:det_g} also $\det g = 1 + \lambda^2$ is constant, \eqref{eq:div_loc_coords} ensures that
	\begin{equation*}
		\div Z (x,t) = \sum_{i=1}^n \de_i Z_i(x,t) = \sum_{i = 1}^n \de_i Y_i (x,\lambda|x|)= \div^{S_{\lambda}'} Y (x,\lambda|x|) = 0 .
	\end{equation*}
    This closes the proof.
\end{proof}

\section{Minimality of $E$}
\label{sec:Min_E}
In this section, we demonstrate Theorem \ref{thm:min}. The idea is to apply the Divergence Theorem to the vector field $Z$ in the intersection between $E \difsim F$ and a sequence of sets invading $\Omega_{\lambda}' \subset \Omega_{\lambda}$. Subsequently, since $Z$ is tangent to $S_{\lambda}'$, by an approximation argument, we will be able to infer the minimality condition of Theorem \ref{thm:min}.

\begin{proof}[Proof (of Theorem \ref{thm:min}).]
    Up to rescaling, we can assume $R = 1$. For $\epsilon > 0$, let
	\begin{equation*}
		A^{\epsilon} := \{ (x,t) \in \R^{n+1} \, : \, |x'| \leq \epsilon \} \qquad \text{and} \qquad \Omega_{\lambda}^{\epsilon} := \Omega_{\lambda} \setminus A^{\epsilon} .
	\end{equation*}
	We observe that, for every $\epsilon > 0$, $\Omega_{\lambda}^{\epsilon} \subset \Omega_{\lambda}'$. When $0 < \lambda \leq \bar{\lambda}(n)$, we can consider the vector field $Z : \Omega_{\lambda}' \to \R^{n+1}$ of Theorem \ref{thm:excal}. By the Divergence Theorem (see \cite{maggi2012sets}) and since $Z$ is divergence-free, we have
	\begin{equation} \label{eq:0=divZ}
		0 = \int_{(F \setminus E)\cap\Omega_{\lambda}^{\epsilon}} \div Z = \int_{\de^{\ast}((F \setminus E)\cap\Omega_{\lambda}^{\epsilon})} \left\langle Z \, , \, \nu_{(F \setminus E)\cap\Omega_{\lambda}^{\epsilon}} \right\rangle \, d \Hau^{n-1} .
	\end{equation}
	Owing to \eqref{eq:deEcapF}, there holds
	\begin{equation*}
		\de^{\ast} ((F \setminus E) \cap \Omega_{\lambda}^{\epsilon}) = \left( \Omega_{\lambda}^{\epsilon} \cap \de^{\ast}(F \setminus E)  \right) \cup \left( (F \setminus E)^{(1)} \cap \de\Omega_{\lambda}^{\epsilon} \right) \cup Q\left(F \setminus E,\Omega_{\lambda}^{\epsilon}\right)^+ ,
	\end{equation*}
	and this in combination with \eqref{eq:0=divZ} and \eqref{eq:nuEcapF} ensures that
	\begin{equation} \label{eq:0=divZ_2}
		\begin{split}
			0 = \int_{(F \setminus E)\cap\Omega_{\lambda}^{\epsilon}} \div Z & = \int_{\Omega_{\lambda}^{\epsilon} \cap \de^{\ast}(F \setminus E)} \left\langle Z \, , \, \nu_{F \setminus E} \right\rangle \, d \Hau^{n-1} + \\
			& \qquad \qquad + \int_{\left( (F \setminus E)^{(1)} \cap \de\Omega_{\lambda}^{\epsilon} \right) \, \cup \, Q\left(F \setminus E,\Omega_{\lambda}^{\epsilon}\right)^+} \left\langle Z \, , \, \nu_{\Omega_{\lambda}^{\epsilon}} \right\rangle \, d \Hau^{n-1} \\
			& = (I) + (II) .
		\end{split}
	\end{equation}
	By \eqref{eq:deEsetminusF}, because $Z = e_1 = \nu_E$ on $\de^{\ast} E \cap \Omega_{\lambda} = H_{\lambda}$, one has
	\begin{equation} \label{eq:est_(I)}
		\begin{split}
			(I) & = \int_{\Omega_{\lambda}^{\epsilon} \cap E^{(0)} \cap \de^{\ast} F} \left \langle Z \, , \, \nu_{F} \right \rangle \, d \Hau^{n-1} - \int_{\Omega_{\lambda}^{\epsilon} \cap F^{(1)} \cap \de^{\ast} E} \left \langle Z \, , \, \nu_{E} \right \rangle \, d \Hau^{n-1} - \int_{\Omega_{\lambda}^{\epsilon} \, \cap \, Q(E,F)^-} \left \langle Z \, , \, \nu_{E} \right \rangle \, d \Hau^{n-1} \\[0.2cm]
			& \leq \p(F;E^{(0)} \cap \Omega_{\lambda}^{\epsilon}) - \p(E;F^{(1)} \cap \Omega_{\lambda}^{\epsilon}) \\
			& = \p(F;\bar{E}^{c} \cap \Omega_{\lambda}^{\epsilon}) - \p(E;F^{(1)} \cap \Omega_{\lambda}^{\epsilon}) .
		\end{split}
	\end{equation}
	Let us now estimate $(II)$. Because $E \difsim F \Subset B_1$, clearly $\de^{\ast} (F \setminus E) \subset B_1$ and $(F \setminus E)^{(1)} \subset B_1$. Thus, by applying \eqref{eq:deEsetminusF} to $\Omega_{\lambda}^{\epsilon} = \Omega_{\lambda} \setminus A^{\epsilon}$ and \eqref{eq:nuEsetminusF} to its normal, we have
	\begin{equation} \label{eq:est_(II)}
		\begin{split}
			|(II)| & \leq \int_{B_1 \cap \de \Omega_{\lambda}^{\epsilon}} \left| \left \langle Z \, , \, \nu_{\Omega_{\lambda}^{\epsilon}} \right \rangle \right| \, d \Hau^{n-1} \\
			& \leq \int_{B_1 \cap A^{\epsilon} \cap \de \Omega_{\lambda}} \left| \left \langle Z \, , \, \nu_{\Omega_{\lambda}} \right \rangle \right| \, d \Hau^{n-1} + \int_{B_1 \cap \Omega_{\lambda} \cap \de A^{\epsilon}} \left| \left \langle Z \, , \, \nu_{A^{\epsilon}} \right \rangle \right| \, d \Hau^{n-1} .
		\end{split}
	\end{equation}
	Now, since $Z$ is tangent to $S_{\lambda}'$, the first integral on the right hand side of \eqref{eq:est_(II)} vanishes. This implies that
	\begin{equation*}
		|(II)| \leq \Hau^{n-1}(B_1 \cap \de A^{\epsilon}) = C(n) \, \epsilon^{n-2} .
	\end{equation*}
    This in combination with \eqref{eq:0=divZ_2} and \eqref{eq:est_(I)} yields
    \begin{equation} \label{eq:sum_1_calib}
    	\p(E;F^{(1)} \cap \Omega_{\lambda}^{\epsilon}) \leq \p(F;\bar E ^ c \cap \Omega_{\lambda}^{\epsilon}) + C(n) \, \epsilon^{n-2} .
    \end{equation}
    By applying the Divergence Theorem in $(E \setminus F) \cap \Omega_{\lambda}^{\epsilon}$ and arguing very similarly as above, we also obtain
    \begin{equation} \label{eq:sum_2_calib}
    	\begin{split}
    		\p(E;F^{(0)} \cap \Omega_{\lambda}^{\epsilon}) & \leq \p(F;E^{(1)} \cap \Omega_{\lambda}^{\epsilon}) + C(n) \, \epsilon^{n-2} \\
    		& = \p(F;E \cap \Omega_{\lambda}^{\epsilon}) + C(n) \, \epsilon^{n-2}
    	\end{split}
    \end{equation}
    Summing up \eqref{eq:sum_1_calib} and \eqref{eq:sum_2_calib}, noticing that
    \begin{equation*}
    	\p(E;F^{(1)} \setminus B_1) = \p(E;F^{(0)} \setminus B_1) = 0 \qquad \text{and} \qquad \p(F;E \setminus B_1) = \p(F;\bar{E}^{c}\setminus B_1) = 0 ,
    \end{equation*}
    we deduce that
    \begin{equation*}
    	\p(E;(F^{(0)} \cup F^{(1)}) \cap \Omega_{\lambda}^{\epsilon} \cap B_1) \leq \p(F;(\bar{E}^c \cup E) \cap \Omega_{\lambda}^{\epsilon} \cap B_1) + C(n) \, \epsilon^{n-2} .
    \end{equation*}
    By applying Federer's Theorem \cite[Theorem 16.2]{maggi2012sets}, we then get
    \begin{equation*}
    	\p(E;\Omega_{\lambda}^{\epsilon} \cap B_1) \leq \p(F;\Omega_{\lambda}^{\epsilon} \cap B_1) + C(n) \, \epsilon^{n-2} .
    \end{equation*}
    Finally, passing to the limit as $\epsilon \to 0^+$ in the relation above, we obtain that $$\p(E;\Omega_{\lambda} \cap B_1) \leq \p(F;\Omega_{\lambda} \cap B_1),$$ and we conclude.
\end{proof}


\end{document}